\newcommand{\Q}{\mathbb{Q}}
\newcommand{\C}{\mathbb{C}}
\newcommand{\N}{\mathbb{N}}
\newcommand{\order}{\mathop{\mathrm{order}}\nolimits}
\newcommand{\numerator}{\mathop{\mathrm{numerator}}\nolimits}
\newcommand{\denominator}{\mathop{\mathrm{denominator}}\nolimits}
\def \bg #1 {\begin{tabular}{{#1}}}
\def \nd {\end{tabular}}
\newtheorem{theorem}{Theorem}
\newtheorem{corollary}{Corollary}
\newenvironment{algorithm}[1]{
  \begin{center}
    {\bf Algorithm: #1}\\*
    \begin{tabular}{|p{80mm}|} \hline
} {
 \\ \hline
 \end{tabular}
 \end{center}
}
\begin{document}
%
\conferenceinfo{ISSAC}{'17 University of Kaiserslautern, Kaiserslautern, Germany}

\conferenceinfo{ISSAC'17,} {July 25--28, 2017, Kaiserslautern, Germany.}

\CopyrightYear{2017}


\title{Algorithmic Verification of Linearizability for Ordinary Differential Equations}
\numberofauthors{3}
\author{
\alignauthor
Dmitry~A.~Lyakhov\\
\affaddr{King Abdullah University of Science and Technology\\
Thuwal, Makkah Region, Saudi Arabia}\\
\href{mailto:dmitry.lyakhov@kaust.edu.sa}{dmitry.lyakhov@kaust.edu.sa}
\alignauthor
Vladimir~P.~Gerdt\\
\affaddr{
Joint Institute for Nuclear Research, Dubna, Russia and Peoples' Friendship University of Russia, Moscow, Russia}\\
\href{mailto:gerdt@jinr.ru}{gerdt@jinr.ru}
\alignauthor
Dominik~L.~Michels \\
\affaddr{King Abdullah University of Science and Technology\\
Thuwal, Makkah Region, Saudi Arabia}\\
\href{mailto:dominik.michels@kaust.edu.sa}{dominik.michels@kaust.edu.sa}
}\maketitle
\begin{abstract}
For a nonlinear ordinary differential equation solved with respect to the highest order derivative and rational in the other derivatives and in the independent variable, we devise two algorithms to check if the equation can be reduced to a linear one by a point transformation of the dependent and independent variables. The first algorithm is based on a construction of the Lie point symmetry algebra and on the computation of its derived algebra. The second algorithm exploits the differential Thomas decomposition and allows not only to test the linearizability, but also to generate a system of nonlinear partial differential equations that determines the point transformation and the coefficients of the linearized equation. The implementation of both algorithms is discussed and their application is illustrated using several examples.
\end{abstract}

\category{I.1.2}{Symbolic and Algebraic Manipulation}{Algorithms}

\terms{Algorithms}

\keywords{Algorithmic linearization test, determining equations, differential Thomas decomposition, Lie symmetry algebra, ordinary differential equations, point transformation, power series solutions.

\section{Introduction}
Solving nonlinear ordinary differential equations (ODEs) is one of the classical and practically important research areas in applied mathematics. In practice, such equations are mostly solved numerically or by approximate analytical methods since obtaining their explicit solution is usually very difficult or even impossible. One of the important approaches for solving a nonlinear ODE explicitly considers the existence of an invertible linearizing transformation of the variables and its construction. The reduction of a nonlinear ODE to a linear one makes its explicit integration much easier and often allows for obtaining an exact solution.

The linearization problem for a second-order ODE
\begin{equation}\label{2ndOrder}
y^{\prime\prime}+f(x,y,y^\prime)=0
\end{equation}
was solved by Lie~(\cite{Lie'1883a}, Sect.~1), who applied his general theory of
integration of ODEs by means of a group of point transformations. He proved that $f$ is at most cubic in $y^\prime$ for a linearizable equation and derived the necessary and sufficient conditions of linearizability. These conditions have the form of two explicit and easily verifiable equalities~\eqref{LieTest1} containing differential polynomials in the coefficients of $f$ as a polynomial in $y^\prime$:
\begin{equation}\label{LieTest}
 f=F_3(x,y)(y^\prime)^3+F_2(x,y)(y^\prime)^2+F_1(x,y)\,y^\prime+F_0(x,y)\,.
\end{equation}

Lie's ideas and methods were extended and applied to third-order equations $y^{\prime\prime\prime}=f(x,y,y^\prime,y^{\prime\prime})$ \cite{IbragimovMeleshko'05} and later to fourth-order equations $y^{{\prime\prime\prime\prime}}=f(x,y,y^\prime,y^{\prime\prime},y^{\prime\prime\prime})$ \cite{IbragimovMeleshkoSuksern'08}.
In these contributions, all possible structures of the candidates for the linearization were found and the explicit form of necessary and sufficient linearizability conditions of the coefficients of those structures were derived. Therefore, given an
ODE of second or third order, to check whether  it is linearizable by a point transformation or not, it is sufficient to verify whether the relevant explicit linearizability conditions are satisfied or not. In practice, such a verification typically needs a computer-based symbolic algebraic computation for the simplification of the resulting expressions. An additional point to emphasize is that if the ODE contains parameters and/or arbitrary functions, then the linearizability
conditions imply the algebraic and/or differential constraints on these parameters and/or functions that provide the linearization. Generally, however, these constraints may include the point transformation functions, and it may be highly conjectural to solve the constraints and to find a linearizing point transformation.

In the present paper we suggest two algorithmic linearization tests applicable to a quasi-linear ODE (solved for the highest derivative) of any order greater or equal to two with a rational dependence on the other derivatives and the independent variable. The first linearization test is applicable to ODEs which do not contain parameters and arbitrary functions. This test is based on the construction of the Lie point symmetry algebra for the input ODE. The relevant mathematical methods are described in several textbooks (see, for example,~\cite{Ovsyannikov'92}--\cite{Ibragimov'09}). To detect linearizability
we compute the maximal abelian dimension of the Lie symmetry algebra and make use of the results of Mahomed and Leach~\cite{MahomedLeach'90}. Unlike the first test, our second test exploits the differential Thomas decomposition~(\cite{Thomas'37}--\cite{Robertz'14}), an universal algorithmic tool for the algebraic analysis of polynomially-nonlinear systems of partial differential equations (PDEs), and allows not only for the detection of linearizability but also for the derivation of necessary and sufficient linearizability conditions for arbitrary functions or parameters occurring in these equations. An example of such a problem is given by Eq.~\eqref{2ndOrder} whose linearizability conditions are given by Eq.~\eqref{LieTest}. Therefore, the second test can reproduce the above mentioned results of~\cite{Lie'1883a}--\cite{IbragimovMeleshkoSuksern'08}. Besides, via the second linearization test one can generate differential equations for a linearizing point transformation and for the coefficients of the linearized equation that are suitable for finding the transformation and the coefficients. However, the first linearization test is computationally more efficient and it is therefore advisable to apply it first when considering higher-order equations, and then, in the case of linearizability, apply the second test in order to construct the linearizing point transformation and the reduced linear form of the ODE.

This paper is organized as follows. In Sect.~2 we briefly describe the mathematical objects we deal with before presenting our algorithms in Sect.~3. The implementation of these algorithms in Maple is then described in Sect.~4 and its application is illustrated in Sect.~5 using several examples. Finally, we provide a conclusion in Sect.~6.

\section{Underlying Equations}
In this paper we consider ODEs of the form\\[-0.3cm]
\begin{equation}\label{ode}
y^{(n)}+f(x,y,y^{\prime},\ldots,y^{(n-1)})=0\,,\quad y^{(k)}:=\frac{d^ky}{dx^k}
\end{equation}
with $f\in \C(x,y,y^{\prime},\ldots,y^{(n-1)})$\footnote{In the subsequent, everywhere where it
is necessary from the computational point of view, the field $\Q$ is assumed to be considered instead of the field $\C$.} solved with respect to the highest order derivative. As additional arguments, the function $f$ may also include parameters and/or arbitrary functions in $x$ and/or $y$. Given an ODE of the form \eqref{ode}, our aim is to check the
existence of an invertible transformation\footnote{Hereafter, all functions we deal with are assumed to be smooth.}
\begin{equation}\label{transformation}
u=\phi(x,y)\,,\quad t=\psi(x,y)
\end{equation}
which maps \eqref{ode} into a linear $n$-th order homogeneous equation
\begin{equation}\label{lode}
u^{(n)}(t)+\sum_{k=0}^{n-1} a_{k}(t)\,u^{(k)}(t)=0\,,\quad u^{(k)}:=\frac{d^ku}{dt^k}\,.
\end{equation}
The invertibility of~\eqref{transformation} is provided by the inequation
\begin{equation}\label{Jacobian}
 J:=\phi_x \psi_y-\phi_y \psi_x\neq 0\,.
\end{equation}
If such a transformation exists for $n\geq 3$, then it can always be chosen (cf.~\cite{Olver'95}, Thm.~6.54;
\cite{Ibragimov'09}, Thm.~6.6.3) in a way that~\eqref{lode} takes the Laguerre-Forsyth normal form
\begin{equation}\label{lf-form}
u^{(n)}(t)+\sum_{k=0}^{n-3} a_{k}(t)\,u^{(k)}(t)=0\,.
\end{equation}
A first-order ODE $y^\prime=f(x,y)$ is always linearizable, but its linearization procedure is as hard as the integration of the equation (cf.\cite{Arnold'92},\,Ch.~2,\,Thm.~1).  For $n=2$ any homogeneous linear equation
\begin{equation*}\label{2ord_lode}
 y^{\prime\prime}(x)+a(x)y^\prime(x)+b(x)y(x)=0
\end{equation*}
can be transformed by a substitution
\[
   t=\varphi(x),\quad \varphi^\prime(x)\neq 0,\quad u=\sigma(x)y,\quad \sigma(x)\neq 0
\]
to the simplest second order equation~(\cite{Ibragimov'09},\,Thm.~3.3.1)
\begin{equation}\label{lf-form-of2ord-lode}
u^{\prime\prime}(t)=0\,.
\end{equation}

One way to check the linearizability of Eq.~\eqref{ode} is to follow the classical approach by
Lie~\cite{Lie'1883a} to study the symmetry properties of Eq.~\eqref{ode} under the {\em infinitesimal}
transformation
\begin{equation}\label{inf_trans}
\tilde{x}=x+ \varepsilon\,\xi(x,y) + \mathcal{O}(\varepsilon^2)\,,\ \ \tilde{y}=y + \varepsilon\,\eta(x,y) +
\mathcal{O}(\varepsilon^2)\,.
\end{equation}
The {\em invariance condition} for Eq.~\eqref{ode} under the transformation~\eqref{inf_trans} is given by the
equality
\begin{equation}\label{invariance}
 {\cal{X}}(y^{(n)}+f(x,y,...,y^{(n-1)}))|_{y^{(n)}+f\left(x,y,\ldots,y^{(n-1)}\right)=0}=0,
\end{equation}
where the {\em symmetry operator} reads
\begin{equation}\label{symm_generator}
{\cal{X}}:=\xi\,{\partial_x}+\sum_{k=0}^n\eta^{(k)}\partial_{y^{(k)}}\,,\
\eta^{(k)}:=D_x\eta^{(k-1)}-y^{(k)}D_x\,\xi,
\end{equation}
$\eta^{(0)}:=\eta$ and $D_x:=\partial_x+\sum_{k\geq 0}y^{(k+1)}\partial_{y^{(k)}}$
is the total derivative operator with respect to $x$.

The invariance condition~\eqref{invariance} means that its left-hand side vanishes when
Eq.~\eqref{ode} holds. Then the application of~\eqref{symm_generator} to the left-hand side of Eq.~\eqref{ode}
and the substitution of $y^{(n)}$ with $-f(x,y,\ldots,y^{(n-1)})$ in the resulting expression leads to the equality $g=0$
with the polynomial dependence of $g$ on the derivatives $y^\prime,\ldots,y^{(n-1)}$. Since, by
Def.~\eqref{inf_trans}, the functions  $\xi$ and $\eta$ do not depend on these derivatives, the equality $g=0$
holds if and only if all coefficients in $y^\prime,\ldots,y^{(n-1)}$ are equal to zero. This leads to an
overdetermined system of linear PDEs in $\xi$ and $\eta$ called {\em determining system}. Its solution yields a
set of symmetry operators whose cardinality we denote by $m$. This set forms a basis of the $m$-dimensional {\em Lie
symmetry algebra}
\begin{equation}\label{LieAlgebra}
[{\cal{X}}_i,{\cal{X}}_j]=\sum_{k=1}^{m} C^k_{i,j}{\cal{X}}_k\,,\quad 1\leq i<j\leq m\,.
\end{equation}
We denote the Lie symmetry algebra by $L$ and $m=\dim(L)$.  Its {\em derived algebra} $L^\prime\subset L$
is a subalgebra that consists of all commutators of pairs of elements in $L$.

Lie showed~(\cite{Lie'1883b}, Ch.~12, p.~298, ``Satz'' 3) that the Lie point symmetry algebra of an $n$-order
ODE has a dimension $m$ satisfying
\begin{equation*}\label{inequality}
 n=1,\ m=\infty;\ \ n=2,\ m\leq 8;\ \ n\geq 3,\ m\leq n+4\,.
\end{equation*}

Later, the interrelations between $n$ and $m$ were established that provide the linearizability of~\eqref{ode}
by a point transformation~\eqref{transformation} in the absence of parameters and arbitrary functions. Here
we present the two theorems that describe such interrelations and form the basis of our first linearization test.
\begin{theorem}{\em(\cite{MahomedLeach'90}, Thm.~1)}
A necessary and sufficient condition for the linearization of~\eqref{ode} with $n\geq 3$ via a point
transformation is the existence of an abelian $n$-di\-men\-sional subalgebra of~\eqref{LieAlgebra}.
\label{theorem1}
\end{theorem}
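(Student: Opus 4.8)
The plan is to establish both directions by exploiting the structure of the point symmetries of the canonical homogeneous linear equation~\eqref{lode} together with the behaviour of commuting vector fields in the plane.

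For necessity, I would assume that~\eqref{ode} is mapped by a point transformation~\eqref{transformation} onto the homogeneous linear equation~\eqref{lode} and fix a fundamental system of solutions $u_1(t),\ldots,u_n(t)$. The key observation is that each translation-by-a-solution field $U_j=u_j(t)\,\partial_u$ is a point symmetry of~\eqref{lode}: adding a solution to a solution of a homogeneous linear equation again yields a solution, so the invariance condition~\eqref{invariance} is satisfied. Since the $u_j$ depend on $t$ only, $[U_i,U_j]=(u_i\,\partial_u u_j-u_j\,\partial_u u_i)\,\partial_u=0$, so $\langle U_1,\ldots,U_n\rangle$ is an $n$-dimensional abelian subalgebra. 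A point transformation acts on symmetry generators by change of variables and preserves the Lie bracket, hence is an isomorphism of the respective symmetry algebras; pulling this subalgebra back through~\eqref{transformation} produces the required $n$-dimensional abelian subalgebra of~\eqref{LieAlgebra}.

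For sufficiency, let $A=\langle X_1,\ldots,X_n\rangle\subseteq L$ be abelian with $n\geq 3$. The first step is to pin down the rank of the distribution these fields span in the $(x,y)$-plane. If it were generically $2$, I would pick $X_1,X_2$ independent at a generic point; since $[X_1,X_2]=0$, the flow-box theorem straightens them simultaneously to $\partial_s,\partial_w$, and then $[X_k,\partial_s]=[X_k,\partial_w]=0$ forces every remaining $X_k$ to have constant coefficients, so all the $X_j$ would lie in the two-dimensional real span of $\partial_s,\partial_w$, contradicting the independence of $n\geq 3$ fields. Thus the distribution has rank $1$: all $X_j$ are tangent to one foliation, which I straighten to the leaves $t=\mathrm{const}$, giving $X_j=\eta_j(t,u)\,\partial_u$. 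Commutativity then reads $\eta_1\,\partial_u\eta_j=\eta_j\,\partial_u\eta_1$, i.e.\ $\partial_u(\eta_j/\eta_1)=0$, so $\eta_j=c_j(t)\,\eta_1$; rectifying the single field $\eta_1\,\partial_u$ by a further fibre-preserving change of $u$ brings the whole algebra to the canonical form $X_j=c_j(t)\,\partial_u$ with $c_1,\ldots,c_n$ linearly independent functions of $t$.

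It then remains to read off linearity. Each $c_j(t)\,\partial_u$ being a symmetry means the transformed equation is invariant under $u\mapsto u+\varepsilon\sum_j\lambda_j c_j(t)$, so its solution set is stable under translation by the $n$-dimensional space $V=\mathrm{span}(c_1,\ldots,c_n)$. Fixing one solution $u_0$, the affine family $u_0+V$ consists of solutions; since the $c_j$ are linearly independent their Wronskian is not identically zero, so the map sending $u_0+V$ to the initial data $(u,u',\ldots,u^{(n-1)})(t_0)$ is onto, and by existence and uniqueness of solutions $u_0+V$ is the entire solution set. An equation whose general solution is affine in its $n$ parameters is exactly a linear one, concretely the vanishing of the Wronskian determinant of $u-u_0$ against $c_1,\ldots,c_n$, and the final fibre-preserving shift $u\mapsto u-u_0(t)$ renders it homogeneous, matching~\eqref{lode}. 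I expect the main obstacle to be precisely the rank and straightening step: justifying that the canonical form $c_j(t)\,\partial_u$ is globally attainable requires controlling the loci where $\eta_1$ or the Wronskian degenerate, which is where the smoothness hypotheses and the restriction to a generic point must be invoked with care.
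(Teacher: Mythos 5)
The paper does not actually prove this theorem: it is imported verbatim as Thm.~1 of \cite{MahomedLeach'90}, so there is no internal argument to compare against line by line. Your proposal supplies a self-contained proof, and it is essentially the classical Lie--Mahomed--Leach argument, correctly reconstructed. The necessity half (pulling back the abelian algebra $\{u_j(t)\,\partial_u\}$ of fundamental solutions of~\eqref{lode} through the point transformation, using that point transformations induce Lie-algebra isomorphisms of symmetry algebras) is exactly the mechanism the paper itself uses in its proof of Cor.~\ref{corollary}, where the algebras~\eqref{LA1} and~\eqref{LA2} are exhibited and their derived algebras shown abelian of dimension $n$. The sufficiency half (rank of the distribution is $1$ for $n\geq 3$ by the flow-box argument, canonical form $X_j=c_j(t)\,\partial_u$, and the affine $n$-parameter solution family forcing linearity via a Wronskian identity) is the substance of the cited reference and goes beyond anything the paper writes down; it is the genuinely nontrivial direction and your outline of it is sound, including the correct observation that an abelian algebra of dimension $\geq 3$ of vector fields in the plane cannot have generic rank $2$.

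One step deserves tightening. You write that linear independence of $c_1,\ldots,c_n$ implies their Wronskian is not identically zero; for arbitrary smooth functions this is false (Peano's example), and at this stage the $c_j$ are not yet known to solve a linear equation. The surjectivity you need follows instead from uniqueness of solutions of the transformed ODE: the linear map $c\mapsto\bigl(c(t_0),\ldots,c^{(n-1)}(t_0)\bigr)$ on $V=\mathrm{span}(c_1,\ldots,c_n)$ is injective because $u_0$ and $u_0+c$ are both solutions and coincide if their initial data agree, hence the map is onto $\mathbb{R}^n$ and the nonvanishing of the Wronskian at $t_0$ is a consequence rather than a hypothesis. With that repair, and the local/generic caveats you already flag for the straightening steps, the argument is complete.
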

\begin{theorem}{\em(\cite{MahomedLeach'90}, Sect.~2--4 and Thms.~6,8; \cite{Schwarz'08}, Thm.~5.19)}\footnote{Cf.
Thm.~6.39 in~\cite{Olver'95} regarding $n=2$.} Eq.~\eqref{ode} with $n\geq 2$ is linearizable by
a point transformation if and only if one of the following conditions is fulfilled:
\begin{enumerate}
\item $n=2$, $m=8$;
\item $n\geq 3$, $m=n+4$;
\item $n\geq 3$, $m\in \{n+1,n+2\}$ and~\eqref{LieAlgebra} admits an abelian subalgebra of dimension $n$.
\end{enumerate}\label{theorem2}
\end{theorem}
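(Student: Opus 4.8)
The plan is to exploit the fact that the Lie symmetry algebra, together with the extra structural data appearing in the statement (its dimension $m$ and whether it contains an abelian subalgebra of dimension $n$), is an invariant of the equation under point transformations \eqref{transformation}. First I would verify this invariance: a transformation \eqref{transformation} with $J\neq 0$ conjugates the symmetry operator \eqref{symm_generator} of \eqref{ode} into a symmetry operator of the image equation \eqref{lode}, and this conjugation is an isomorphism of the realized Lie algebras. Consequently it preserves $m=\dim(L)$, the bracket structure \eqref{LieAlgebra}, and the lattice of subalgebras --- in particular it maps an abelian $n$-dimensional subalgebra to one of the same kind. This reduces Theorem~\ref{theorem2} to the following: \eqref{ode} is linearizable iff the symmetry data of \eqref{ode} coincides with that of some linear equation \eqref{lode}, so it suffices to compute the symmetry algebras of the linear equations themselves.

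The second step is to analyze the point symmetries of the linear equation, most conveniently in the Laguerre--Forsyth normal form \eqref{lf-form} for $n\geq 3$. Two families of symmetries are always present. Because solutions of a homogeneous linear ODE add, each solution $\rho(t)$ yields the symmetry $u\mapsto u+\varepsilon\rho(t)$; the $n$ generators obtained from a fundamental system commute, so they span an abelian subalgebra of dimension exactly $n$. Together with the homogeneity (scaling) symmetry $u\,\partial_u$ this already gives $m\geq n+1$ and, crucially, exhibits the abelian $n$-dimensional subalgebra that Theorem~\ref{theorem1} identifies as the signature of linearizability. Thus condition~(3) and the $m=n+4$ clause of condition~(2) are exactly what one expects the symmetry data of a linear equation to satisfy.

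Next I would pin down which dimensions actually occur. Solving the determining system for \eqref{lf-form} and classifying by the coefficients $a_k(t)$, one shows that for $n\geq 3$ the only admissible dimensions are $n+1$ (generic case), $n+2$ (special coefficients), and the maximal value $n+4$, which coincides with Lie's upper bound and is attained precisely when the equation is point-equivalent to the trivial one $u^{(n)}=0$; the intermediate value $n+3$ never arises. This is the computational core carried out in \cite{MahomedLeach'90}. The equivalence then assembles as follows. Necessity: a linearizable equation inherits the symmetry data of a linear one, hence $m\in\{n+1,n+2,n+4\}$ and carries the abelian $n$-subalgebra, giving (2) or (3). Sufficiency: if $m=n+4$ the equation saturates Lie's bound, and rigidity of the maximal-dimensional realization forces point-equivalence to $u^{(n)}=0$, hence linearizability; if $m\in\{n+1,n+2\}$ with an abelian $n$-subalgebra, Theorem~\ref{theorem1} applies directly. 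The case $n=2$ is handled separately: $u''=0$ has the $8$-dimensional projective algebra $\mathfrak{sl}(3,\R)$, which is maximal by Lie's bound, and conversely any second-order ODE with $m=8$ is point-equivalent to $u''=0$ (cf.\ \cite{Olver'95}, Thm.~6.39), giving clause~(1).

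The main obstacle is the dimension classification in the third step --- in particular proving the gap (that $m=n+3$ is impossible for linear equations) and that $m=n+4$ compels equivalence to $u^{(n)}=0$. Both require a delicate compatibility analysis of the linear determining equations rather than a routine count, together with the rigidity argument for the top-dimensional case. The residual subtlety is exactly why conditions~(2) and~(3) are not a single dimension criterion: in the borderline range $m\in\{n+1,n+2\}$ the dimension alone does not decide linearizability, and one genuinely needs the abelian-subalgebra refinement supplied by Theorem~\ref{theorem1}.
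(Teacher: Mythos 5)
Your reconstruction is sound, but note that the paper does not prove this theorem at all: it is imported verbatim from \cite{MahomedLeach'90} and \cite{Schwarz'08}, so there is no in-paper argument to compare against. Your outline matches the strategy of those sources --- invariance of the symmetry algebra under point transformations, explicit computation of the symmetries of a linear equation (the commuting solution-translations $v_i(t)\,\partial_u$ plus the scaling $u\,\partial_u$, which is exactly the computation the paper does carry out in its proof of Corollary~1 when it exhibits $L_{n+1}$ and $L_{n+2}$), the dimension classification $m\in\{n+1,n+2,n+4\}$ with the gap at $n+3$, and the rigidity statement that $m=n+4$ (resp.\ $m=8$ for $n=2$) forces equivalence to $u^{(n)}=0$. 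You are right to flag the dimension gap and the top-dimensional rigidity as the genuine content; those are precisely the parts established by the case analysis in \cite{MahomedLeach'90}, Sects.~2--4, and your sketch correctly defers to them rather than claiming an easy argument where none exists.
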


These theorems show that the verification of the third condition requires, in addition to the determination of $m$,
a computation to check the existence of an abelian Lie symmetry subalgebra of dimension $n$. To our knowledge,
there is only an algorithm described in the literature~\cite{Ceballos'12} for the computation of
the {\em maximal abelian dimension}, i.e.~dimension of the maximal abelian subalgebra of a finitely-dimensional
Lie algebra given by its structure constants. The algorithm is reduced to solving the quadratically nonlinear
system of multivariate polynomial equations providing vanishing of the Lie bracket between two arbitrary
vectors in the Lie algebra. Clearly, the runtime of the algorithm is at least exponential in
the dimension $m$ of the algebra.

Instead, to verify the third condition in Thm.~\ref{theorem2} we devise a much more efficient algorithm. Our algorithm relies on the following statement which is a corollary to Thms.~\ref{theorem1} and~\ref{theorem2}.

\begin{corollary}\label{corollary}
The third condition is equivalent to
\begin{enumerate}
\item[3'.] $n\geq 3$, $m\in \{n+1,n+2\}$ and the derived algebra of~\eqref{LieAlgebra} is abelian of dimension $n$.
\end{enumerate}

\end{corollary}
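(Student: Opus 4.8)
The plan is to prove the two implications separately, keeping the common hypotheses $n\geq 3$ and $m\in\{n+1,n+2\}$ fixed throughout. Under these hypotheses the assertion reduces to showing that $L$ possesses an abelian $n$-dimensional subalgebra if and only if the derived algebra $L'$ is abelian of dimension $n$. Since a point transformation \eqref{transformation} induces an isomorphism of the associated symmetry algebras, and the properties ``abelian'' and ``dimension $n$'' are invariant under such an isomorphism, I am free to compute in whichever model of $L$ is most convenient.

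The implication $3'\Rightarrow 3$ is immediate: if $L'$ is abelian of dimension $n$, then $L'$ is itself an abelian $n$-dimensional subalgebra of \eqref{LieAlgebra}, so condition $3$ holds.

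For the converse $3\Rightarrow 3'$, I would start from Thm.~\ref{theorem1}: the existence of an abelian $n$-dimensional subalgebra makes \eqref{ode} linearizable, so I may assume it is already in the linear form \eqref{lode} (or the normal form \eqref{lf-form}). Then $L$ contains the $n$-dimensional abelian ideal $S$ spanned by the solution symmetries $u_i(t)\,\partial_u$, where $u_1,\dots,u_n$ is a fundamental system of solutions of \eqref{lode}, together with the homogeneity field $u\,\partial_u$. A one-line bracket computation gives $[\,u\,\partial_u,\ u_i\,\partial_u\,]=-\,u_i\,\partial_u$, so every generator of $S$ lies in $L'$; hence $S\subseteq L'$, which already yields $\dim L'\geq n$ and shows that $S$ is abelian. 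It remains to prove the matching upper bound $\dim L'\leq n$.

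The upper bound is the decisive step. Passing to the quotient $L/S$, whose dimension is $m-n\in\{1,2\}$, I note that a Lie algebra of dimension at most two is solvable and therefore not perfect, so $L'\neq L$. When $m=n+1$ this forces $\dim L'=n$ at once, and we are done. When $m=n+2$ one must still exclude a one-dimensional $(L/S)'$, that is, a non-abelian two-dimensional quotient $L/S$; this is the main obstacle, since it cannot be settled by dimension counting alone. Here I would invoke the structural classification underlying Thm.~\ref{theorem2} (Mahomed--Leach): for a linearizable $n$-th order equation with exactly $n+2$ symmetries the two generators transverse to $S$ commute modulo $S$, the affine/$sl(2)$ structure responsible for a non-abelian quotient arising only in the maximal case $m=n+4$ of the trivial equation $u^{(n)}=0$. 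Consequently $(L/S)'=0$, so $L'=S$ is abelian of dimension exactly $n$, which establishes condition $3'$ and completes the equivalence.
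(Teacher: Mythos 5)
Your proposal is correct and follows the same core route as the paper: the direction $3'\Rightarrow 3$ is immediate because $L'$ is itself an abelian $n$-dimensional subalgebra, and for $3\Rightarrow 3'$ one reduces via Thm.~\ref{theorem1} and the isomorphism of symmetry algebras to the linear model, where the solution symmetries $v_i(t)\,\partial_u$ span an $n$-dimensional abelian ideal $S$ contained in $L'$ thanks to the bracket with $u\,\partial_u$. The only organizational difference is in the upper bound $\dim L'\leq n$: the paper simply writes down the full generator lists \eqref{LA1} and \eqref{LA2} and computes every bracket, observing in particular that $[{\cal{X}}_{n+1},{\cal{X}}_{n+2}]=[u\,\partial_u,\partial_t]=0$ and $[{\cal{X}}_i,{\cal{X}}_{n+2}]=-v_i'(t)\,\partial_u\in S$, so that $L'=S$ outright; you instead pass to the quotient $L/S$ and use its solvability, which settles $m=n+1$ cleanly but leaves the $m=n+2$ case resting on an appeal to the Mahomed--Leach classification to rule out a non-abelian two-dimensional quotient. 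That appeal is legitimate (it is the same classification the paper cites to identify the $(n+2)$-dimensional algebra with $L_{n+2}$, i.e.\ the constant-coefficient case with the extra translation $\partial_t$), but your argument would be fully self-contained if you replaced it by the one-line computation $[u\,\partial_u,\partial_t]=0$ that the paper records explicitly.
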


\begin{proof}
Under the third condition, since $L^\prime \subset L$, Eq.~\eqref{ode} is linearizable by Thm.~1. Let Eq.~\eqref{ode}
be linearizable. The symmetry Lie algebra of~\eqref{lode} is {\em similar} and hence {\em isomorphic} to that of~\eqref{ode} ~(cf.~\cite{Ovsyannikov'92}, Ch.~2, \textsection 7.9). It is easy to see that a linear $n$-th order equation~\eqref{lode} with variable coefficients admits the Lie point symmetry group
\begin{equation}\label{group}
\{\,\bar{t}=t,\ \bar{u}=u+c_i\cdot v_i(t)\ (i=1,..,n),\  \bar{u}=c_{n+1}\cdot u\,\},
\end{equation}
where $c_i,c_{n+1}$ are constants (the group parameters) and $v_i(t)$ are the fundamental solutions of~\eqref{lode}.
The Lie group~\eqref{group} has the $(n+1)$-dimensional Lie algebra (cf.~\cite{Schwarz'08}, Thm.~5.19)
\begin{equation}\label{LA1}
L_{n+1}:=\{\,{\cal{X}}_i:=v_i(t)\,\partial_u\  (i=1,..,n),\  {\cal{X}}_{n+1}:=u\,\partial_u\,\}\,.
\end{equation}
If a linear $n$-th order Eq.~\eqref{lode} has constant coefficients, then in addition to~\eqref{LA1}
the Lie point symmetry group~\eqref{group} includes the translation $\bar{t}=t+c_{n+2}$ and, hence, its Lie
algebra, in addition to~\eqref{LA1},  has one more element:
\begin{equation}\label{LA2}
L_{n+2}:=L_{n+1}\cup \{{\cal{X}}_{n+2}:=\partial_t\}\,.
\end{equation}
Furthermore, $[{\cal{X}}_{n+1},{\cal{X}}_{n+2}]=0$, and for all $i\in \{1,\ldots,n\}$:
\[
[{\cal{X}}_i,{\cal{X}}_{n+1}]={\cal{X}}_i,\  [{\cal{X}}_i,{\cal{X}}_{n+2}] =
- v^\prime_i(t)\partial_u=\sum_{j=1}^n \alpha_j{\cal{X}}_j,\
\]
where $\alpha_i$ are constants. Therefore, both Lie algebras~\eqref{LA1} and~\eqref{LA2} have abelian
derived algebras of dimension $n$.
\end{proof}
It is important to emphasize that $m$ can be algorithmically computed without solving the determining system
what is generally impossible. It suffices to complete the last system to involution (for the theory of
completion to involution we refer to~\cite{Seiler'10}) and to construct power series solutions to the
involutive system~\cite{Reid'91a,Reid'91b}. For instance, as we do in our implementation (Sect.~4) of the
algorithm {\textsl{\bfseries{LinearizationTest~I}} described in Sect.~3.1, one can apply to the determining
system the {\em differential Thomas decomposition}~\cite{BGLHR'12,Robertz'14} for a
{\em degree-reverse lexicographical ranking} and then compute the {\em differential dimension polynomial}~\cite{Lange-Hegermann'14} for the output Janet basis.

The differential Thomas decomposition was suggested in \cite{Thomas'37,Thomas'62} as a
generalization of the Riquier-Janet theory of {\em passive} linear and {\em orthonomic} PDE systems (see also~\cite{Seiler'10}
and the references therein) to polynomially-nonlinear systems of general form. The Thomas decomposition provides a
universal algorithmic tool~\cite{BGLHR'12,Robertz'14} to study a {\em differential system}, which is defined as
follows.

\begin{definition}$($\cite{Thomas'37}--\cite{Robertz'14}$)$
A {\em differential system} is a system  $S:=\{S^{=},S^{\neq}\}$ of differential equations and (possibly) inequations
of the form
\[
   S^{=}:=\{g_1=0,\ldots,g_s=0\},\ S^{\neq}:=\{h_1\neq 0,\ldots,h_t\neq 0\},
\]
where $s$ is a positive integer as well as $t$ if $S^{\neq}\neq \emptyset$, and $q_i,h_j$ are elements in the differential polynomial ring in finitely many
differential indeterminates (dependent variables) over the differential field of characteristics zero.
\label{DS}
\end{definition}

The Thomas decomposition applied to a differential system $S$ yields a finite set of passive (involutive)
and {\em differentially triangular} differential systems called
{\em simple}~(see~\cite{Thomas'37}-\cite{Robertz'14}) that partition the solution set  of the input
differential system. Algebraically, this provides a characterizable decomposition~\cite{Hubert'03} of
the radical differential ideal $\sqrt{{\cal{I}}}$ where ${\cal{I}}$ is the differential ideal generated
by the polynomials in $S^{=}$.

Unlike the {\textsl{\bfseries{LinearizationTest I}} where one can use, due to the linearity of determining
systems, any procedure of completion to involution (e.g.~the standard form algorithm~\cite{Reid'91a}), our
second algorithm {\textsl{\bfseries{LinearizationTest II}} (Sect.~3.2) is oriented to the Thomas
decomposition.

To apply it, we need to formulate the conditions for the functions $\phi(x,y)$, $\psi(x,y)$
in~\eqref{transformation} and for the coefficients $a_k(t)$ in~\eqref{lf-form} (if $n\geq 3$)
such that these conditions hold if and only if~\eqref{ode} is linearizable. In addition to the
input differential system, the Thomas decomposition is determined by a {\em ranking}, that is,
a linear ordering on the partial derivatives compatible with derivations~(\cite{Thomas'37}--\cite{Robertz'14})
(in our case with $\partial_x$ and $\partial_y$).

By differentiating the equality $u(\psi(x,y(x)))=\phi(x,y(x))$, that follows from~\eqref{transformation},
$n$ times with respect to $x$, we obtain the following equalities:
\begin{eqnarray}\label{diff-u}
&& u^\prime(t)=\frac{\phi_x+\phi_yy^\prime}{\psi_x+\psi_yy^\prime}\,,\nonumber \\
&& u^{\prime\prime}(t)=\frac{J}{(\psi_x+\psi_yy^\prime)^3}\,y^{\prime\prime}+
\frac{P_2(y^\prime)}{(\psi_x+\psi_yy^\prime)^3}\,,\\
&& \hspace{0.5cm}\vdots \nonumber \\
&& u^{(n)}(t)=\frac{J}{(\psi_x+\psi_yy^\prime)^{n+1}}\,y^{(n)}+\frac{P_n(y^\prime,\ldots,y^{(n-1)})}{(\psi_x+
\psi_yy^\prime)^{2n-1}}\,. \nonumber
\end{eqnarray}
Here $J$ is the Jacobian~\eqref{Jacobian}, $P_k$ $(k=(2,\ldots,n))$ are polynomials in their arguments whose
coefficients are differential polynomials in $\phi$ and $\psi$, for example,
\begin{dmath*}
P_2(y^\prime)=(\psi_x+\psi_yy^\prime)\left(\phi_{xx}+\phi_{xy}y^\prime+\phi_{yy}(y^\prime)^2\right)-
(\phi_x+\phi_yy^\prime)\left(\psi_{xx}+\psi_{xy}y^\prime+\psi_{yy}(y^\prime)^2\right)\,.
\end{dmath*}

Now we replace the derivatives $u^{(k)}$ occurring in~\eqref{lf-form} (or the second-order derivative
in~\eqref{lf-form-of2ord-lode} if $n=2$) with the appropriate right-hand sides in Eqs.~\eqref{diff-u} and
solve the obtained equality with respect to $y^{(n)}$ (or $y^{\prime\prime}$). As a result, we obtain
the equality
\begin{equation}\label{ode-subs}
 y^{(n)}+ \frac{R(y^\prime,\ldots,y^{(n-1)})}{J\cdot (\psi_x+
\psi_yy^\prime)^{n-2}}=0\,,
\end{equation}
where $R$ is a polynomial in $y^\prime,\ldots,y^{(n-1)}$ whose coefficients for $n\geq 3$ are the
differential polynomials not only in $\phi$ and $\psi$ but also in $a_k(t)=a_k(\psi(x,y(x)))$, the
coefficients in Eq.~\eqref{lf-form}.

Denote by $M$ and $N$ the numerator and denominator of the function $f$ in Eq.~\eqref{ode}. Then, after
elimination of $y^{(n)}$ from the equation system~\eqref{ode},~\eqref{ode-subs} and clearing denominators
in the rational functions of the obtained equality we obtain equation
\begin{equation}\label{pol-eq}
  R\cdot N - J\cdot M\cdot  (\psi_x+\psi_yy^\prime)^{n-2}=0\,.
\end{equation}

This equation is a polynomial in $y^\prime,y^{\prime\prime},\ldots,y^{(n-1)}$, and there are no constraints
on these variables. Therefore, the equation holds if and only if all coefficients of the polynomial in the
left-hand side vanish. This condition gives a partial differential equation system in $\phi$, $\psi$ and $a_k$. If the
function $f$ in Eq.~\eqref{ode} depends on parameters and/or undetermined functions in $(x,y)$,
then Eq.~\eqref{pol-eq} contains these parameters/functions.\footnote{One can always consider parameters as
functions in $x$ and $y$ with zero derivatives.}

Let $S^{=}$ be the set of equations obtained from Eq.~\eqref{pol-eq} by equating the coefficients of the
polynomial (in $y^\prime,\ldots,y^{(n-1)}$) in the left-hand side to zero. If $n\geq 3$ we enlarge $S^{=}$
with the set of equations
\begin{equation}\label{PDE-for-a}
  S^{=}=S^{=}\cup_{k=0}^{n-3} \{\psi_y(a_k)_x - \psi_x(a_k)_y=0\}.
\end{equation}
The equation $\psi_y(a_k)_x - \psi_x(a_k)_y=0$ means that $a_k$ is a function of $t$ in accordance
to~\eqref{lf-form}. It is easy to see by differentiating the equality $a_k(t)=a_k(\psi(x,y))$ as follows:
\[
  (a_k)_x=a_t\psi_x,\  (a_k)_y=a_t\psi_y,\ \Longleftrightarrow\ \psi_y(a_k)_x - \psi_x(a_k)_y=0\,.
\]
Since we admit the invertible transformations~\eqref{transformation} only, one has to add to the
enlarged equation set $S^{=}$ the inequation
\begin{equation}\label{inequation}
S^{\neq}:=\{J\neq 0\}
\end{equation}
where $J$ is the Jacobian~\eqref{Jacobian}.

Thereby, the main object of our construction and the statements on its relation to the
linearization are given as follows.

\begin{definition}\label{LDS} The differential system (see Def.~\ref{DS}) made up of the above
constructed PDE set $S^{=}$ and of the inequation set $S^{\neq}=\{J\neq 0\}$ will be called
{\em linearizing differential system}.
\end{definition}

\begin{theorem}\label{LinTD} Eq.~\eqref{ode} is linearizable via a point
transformation~\eqref{transformation} if and only if the linearizing differential system is
consistent, i.e.~has a solution.
\end{theorem}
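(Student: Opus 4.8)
The plan is to prove the biconditional by establishing that each step of the construction running from the point transformation~\eqref{transformation} to the polynomial equation~\eqref{pol-eq} is reversible, so that reading the chain forwards gives one implication and reading it backwards gives the other. I would handle the cases $n=2$ and $n\geq 3$ in parallel, taking the target linear equation to be $u^{\prime\prime}=0$ in~\eqref{lf-form-of2ord-lode} for $n=2$ and the Laguerre--Forsyth form~\eqref{lf-form} for $n\geq 3$; the fact that I may restrict to these normal forms, which is what fixes the shape of the right-hand sides in~\eqref{diff-u} and hence of $R$ in~\eqref{ode-subs}, is exactly the content of the cited reductions (Olver, Thm.~6.54; Ibragimov, Thms.~6.6.3 and~3.3.1).

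For the forward implication I would assume~\eqref{ode} linearizable and fix a linearizing pair $\phi,\psi$; by the normal-form reduction the image equation may be taken to be~\eqref{lf-form} (or $u^{\prime\prime}=0$). Invertibility gives $J\neq 0$, so the inequation set $S^{\neq}$ holds, while each coefficient $a_k$ is genuinely a function of $t=\psi(x,y)$, which is precisely~\eqref{PDE-for-a}. Differentiating $u(\psi)=\phi$ $n$ times yields~\eqref{diff-u}; substituting these into the image equation and solving for $y^{(n)}$ produces~\eqref{ode-subs}, and the hypothesis that $\phi,\psi$ linearize~\eqref{ode} means exactly that~\eqref{ode-subs} coincides with~\eqref{ode}. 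Clearing denominators converts this coincidence into the polynomial identity~\eqref{pol-eq}. Because the coefficients of this polynomial in $y^\prime,\ldots,y^{(n-1)}$ are differential polynomials in $\phi,\psi,a_k$ free of the jet variables, the identity forces every coefficient to vanish --- the same coefficient comparison already used to derive the determining system --- so all equations of $S^{=}$ hold. Thus $\phi,\psi,a_k$ solve the linearizing differential system and it is consistent.

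For the reverse implication I would start from a solution $\phi,\psi,a_k$ of the linearizing differential system. The inequation $J\neq 0$ makes~\eqref{transformation} a local diffeomorphism, hence an admissible point transformation, and~\eqref{PDE-for-a} guarantees that each $a_k$ depends on $t=\psi$ alone, so that~\eqref{lf-form} (or $u^{\prime\prime}=0$) is a genuine linear ODE in $(t,u)$. Running the construction backwards, the vanishing of all coefficients of~\eqref{pol-eq} expressed by $S^{=}$ says that~\eqref{pol-eq} holds identically; dividing through by $J\,(\psi_x+\psi_y y^\prime)^{n-2}$ recovers~\eqref{ode-subs} as an identity equal to~\eqref{ode}, and undoing the differentiations~\eqref{diff-u} through the inverse transformation shows that the transformation carries solutions of the linear equation to solutions of~\eqref{ode} and conversely. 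Hence~\eqref{ode} is linearizable.

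The step I expect to demand the most care is the claim that the passage between~\eqref{ode-subs} and~\eqref{pol-eq} is faithful in both directions, i.e.~that clearing and restoring the factor $J\,(\psi_x+\psi_y y^\prime)^{n-2}$ neither introduces spurious constraints nor discards genuine ones. This is precisely where $J\neq 0$ is indispensable: it certifies invertibility of~\eqref{transformation}, and, since $\psi_x$ and $\psi_y$ cannot both vanish, it also makes $\psi_x+\psi_y y^\prime$ a nonzero polynomial in $y^\prime$ and hence a nonzerodivisor in the ring of the jet variables, so that multiplication by it is injective and clearing denominators is an equivalence rather than a mere implication. The remaining points are bookkeeping: confirming that the leading coefficient of $u^{(n)}$ in~\eqref{diff-u} equals $J/(\psi_x+\psi_y y^\prime)^{n+1}$, which is what lets one solve for $y^{(n)}$ in the monic form~\eqref{ode} and thereby reduce the coincidence of~\eqref{ode} and~\eqref{ode-subs} to equality of their right-hand-side functions.
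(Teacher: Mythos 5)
Your proposal is correct and follows essentially the same route as the paper, which states Theorem~\ref{LinTD} without a separate proof and relies on the preceding construction of the linearizing differential system (Eqs.~\eqref{diff-u}--\eqref{inequation}) being an equivalence at every step. Your explicit two-directional argument, and in particular the observation that $J\neq 0$ makes $\psi_x+\psi_y y^\prime$ a nonzero polynomial in $y^\prime$ and hence a nonzerodivisor, so that clearing denominators in the passage from~\eqref{ode-subs} to~\eqref{pol-eq} loses no information, is a faithful filling-in of the details the paper leaves implicit.
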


\begin{corollary}\label{nonzero}
 Eq.~\eqref{ode} is nonlinearizable via a point transformations~\eqref{transformation} if and
 only if the result of the Thomas decomposition algorithm~(\cite{BGLHR'12}, Alg.~2.25; \cite{Robertz'14},
 Alg.~2.2.56) applied to the linearizing differential system is the empty set.
\end{corollary}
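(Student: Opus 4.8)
The plan is to read Corollary~\ref{nonzero} as the contrapositive of Theorem~\ref{LinTD} combined with the defining property of the Thomas decomposition. By Theorem~\ref{LinTD}, Eq.~\eqref{ode} fails to be linearizable via~\eqref{transformation} precisely when the linearizing differential system (Def.~\ref{LDS}) is inconsistent, i.e.\ has no solution satisfying both the equation set $S^{=}$ and the inequation $S^{\neq}=\{J\neq 0\}$. Thus it suffices to show that this system admits a solution if and only if the Thomas decomposition algorithm~(\cite{BGLHR'12}, Alg.~2.25; \cite{Robertz'14}, Alg.~2.2.56) applied to it returns a non-empty collection of simple systems, i.e.\ is not the empty set.

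First I would invoke the two structural facts about the Thomas decomposition recalled earlier in the paper and established in~\cite{BGLHR'12,Robertz'14}: the algorithm returns finitely many simple (passive and differentially triangular) systems whose solution sets are pairwise disjoint and whose union equals the solution set of the input differential system. Consequently, the solution set of the linearizing differential system is empty if and only if no simple system occurs in the output.

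The key step, and the one requiring the most care, is the \emph{consistency} of each individual simple system: every simple system produced by the decomposition possesses a solution, and it is precisely this property that forbids the algorithm from retaining a spurious system with empty solution set. I would justify it by appeal to the Riquier--Janet existence theorem for passive orthonomic systems (cf.~\cite{Seiler'10} and the references in~\cite{BGLHR'12,Robertz'14}): a simple system admits, for any choice of initial data compatible with its triangular structure, a formal power series (hence, in the present smooth setting, analytic) solution, while its attached inequations exclude only a proper subvariety of admissible data, so that a solution with $J\neq 0$ still exists. Therefore a non-empty output certifies at least one solution of the linearizing differential system, whereas an empty output certifies that no solution exists.

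Combining these observations yields the claim: the output of the Thomas decomposition is the empty set if and only if the linearizing differential system is inconsistent, which by Theorem~\ref{LinTD} holds if and only if Eq.~\eqref{ode} is nonlinearizable via a point transformation~\eqref{transformation}. The only genuine subtlety is the consistency of each simple system together with the compatibility of its inequations; once this is granted, the corollary follows at once.
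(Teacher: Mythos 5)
Your argument is correct and follows essentially the same route as the paper, whose proof consists only of citing \cite{BGLHR'12}, Remark~2.3 and \cite{Robertz'14}, Remark~2.2.58 --- precisely the facts you spell out, namely that the output simple systems partition the solution set of the input and that every simple system is consistent, combined with Thm.~\ref{LinTD}. Your expansion of why each simple system admits a solution compatible with $S^{\neq}$ is a faithful unpacking of those cited remarks rather than a different method.
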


\begin{proof} \cite{BGLHR'12}, Remark 2.3 and~\cite{Robertz'14}, Remark 2.2.58. \end{proof}

\section{Linearization Tests}
In this section we present our algorithms {\textsl{\bfseries{LinearizationTest I}} and
{\textsl{\bfseries{LinearizationTest II}}. These algorithms, given an input equation~\eqref{ode},
verify its linearizability by the point transformation~\eqref{transformation}. In so doing,
the first test is applicable only to an ODE without parameters and undetermined functions
in the variables $x$ and $y$. The second algorithm admits a rational dependence of the function $f$ in Eq.~\eqref{ode}
on such parameters and functions.

\subsection{Linearization test I}

Our first test, presented below, is based on the computation of the Lie symmetry algebra and its analysis.
In line 2 we compute the determining system for~\eqref{ode}. It is the straightforward procedure outlined in
the preceding section and described in most textbooks on Lie symmetry analysis, in particular
in~\cite{Ovsyannikov'92}--\cite{Ibragimov'09}. As a routine, this procedure is present in most computer algebra
packages specialized to such an analysis, for example, in the Maple packages~{\tt DESOLV}~\cite{CarminatiVu'00},
{\tt DESOLVII}~\cite{VuJefferesonCarminati'12}, and {\tt SADE}~\cite{FilhoFigueiro'11}.

Since the determining system is linear, one can use any algorithm for its completion to involution in line 3,
(cf.~\cite{Reid'91a} and~\cite{Seiler'10}, Sect.~10.7). However, we prefer to use the differential Thomas
algorithm here (\cite{BGLHR'12}, Sect.~3 and~\cite{Robertz'14} Sect.~2.2).

The dimension $m$ of the Lie algebra~\eqref{LieAlgebra} (line 4) is the dimension of the solution space of
the determining system and can be computed in several ways (cf.~\cite{Seiler'10}, Sect.~8.2 and 9.3). Having computed the Janet involutive form of the determining system, it
is easy to compute the dimension of its solution via an algorithmic construction of the {\em differential
dimension polynomial}~\cite{Lange-Hegermann'14}.

\begin{algorithm}{\textsl{\bfseries{LinearizationTest I}}\,($q$)
\label{LinearizationTest-1}}
\begin{algorithmic}[1]
\INPUT $q$, a nonlinear differential equation of form~\eqref{ode}
\OUTPUT {\tt True}, if $q$ is linearizable and $\tt False$, otherwise
\STATE $n:=\order(q)$;
\STATE $DS:=$ {\textsl{\bfseries{DeterminingSystem}}}\,($q$);
\STATE $IDS:=$ {\textsl{\bfseries{InvolutiveDeterminingSystem}}}\,($DS$);
\STATE $m:=\dim$({\textsl{\bfseries{LieSymmetryAlgebra}}})\,($IDS$);
\IF{$n=1 \vee (n=2 \wedge m=8) \vee (n>2 \wedge m=n+4) $}
   \RETURN {\tt True};
\ELSIF{$n>2 \wedge (m=n+1 \vee m=n+2)$}
   \STATE $L:=$ {\textsl{\bfseries{LieSymmetryAlgebra}}}\,($IDS$);
   \STATE $DA:=$ {\textsl{\bfseries{DerivedAlgebra}}}\,($L$);
   \IF{$DA$ is abelian and $\dim(DA)=n$}
      \RETURN {\tt True};
   \ENDIF
\ENDIF
\RETURN {\tt False};
\end{algorithmic}
\end{algorithm}

We refer to~\cite{Reid'91b} for the subalgorithm providing computation of the Lie symmetry algebra
(line 8), i.e.~for the computation of the structure constants $C_{i,j}^k$ in Eq.~\eqref{LieAlgebra}.
The last subalgorithm~{\textsl{\bfseries{DerivedAlgebra}}}  in line 9 does the
straightforward computation of the derived algebra via the structure constants.

{\em Correctness and termination}. For the subalgorithms both these properties are either obvious
(as for {\textsl{\bfseries{DerivedAlgebra}}}) or shown in the papers we referred to in the description of the subalgorithms above. Therefore, the whole algorithm {\textsl{\bfseries{LinearizationTest I}}
terminates, and its correctness is provided by Thms.~\ref{theorem1} and \ref{theorem2}, and
Cor.~\ref{corollary}.

\subsection{Linearization test II}

Our second test is based on the differential Thomas decomposition~\cite{BGLHR'12,Robertz'14}.
It admits the rational dependence of Eq.~\eqref{ode} on a finite set of parameters (constants) and/or
undetermined functions in $(x,y)$. In the absence of parameters/functions the corresponding sets
are inputted as the empty ones.

\begin{algorithm}{\textsl{\bfseries{LinearizationTest II}}\,($q,P,H$)
\label{LinearizationTest-2}}
\begin{algorithmic}[1]
\INPUT $q$, a nonlinear differential equation of form~\eqref{ode} of order $\geq 2$; P, a set of
parameters; H, a set of undetermined functions in $(x,y)$
\OUTPUT Set $G$ of differential systems for functions $\phi$ and $\psi$ in~\eqref{transformation} and
(possibly) in elements of $P$ and $H$ if~\eqref{ode} is linearizable, and the empty set, otherwise
\STATE $n:=\order(q)$;
\STATE $G:=\emptyset$;
\STATE $M:=\numerator(f)$;\ \ $N:=\denominator(f)$;
\STATE $J:=\phi_x\psi_y-\phi_y\psi_x$; \COMMENT{Jacobian~\eqref{Jacobian}}
\IF{$n=2$}
  \STATE $r:=u^{\prime\prime}(t)=0$; \COMMENT{ODE~\eqref{lf-form-of2ord-lode}}
  \STATE $A:=\emptyset$;
\ELSE
  \STATE $r:=u^{(n)}(t)+\sum_{k=0}^{n-3}\,a_k(t)u^{(k)}(t)=0$; \COMMENT{ODE~\eqref{lf-form}}
  \STATE $A:=\{a_0,\ldots,a_{n-3}\}$;
\ENDIF
\STATE $r \xrightarrow{\text{by\ }\eqref{transformation}}y^{(n)}+\frac{R(y^\prime,\ldots,y^{(n-1)})}{J\cdot (\psi_x+\psi_yy^\prime)^{(n-2)}}=0$; \COMMENT{Eq.~\eqref{ode-subs}}


\STATE $T:=R\cdot N-M\cdot J\cdot (\psi_x+\psi_yy^\prime)^{(n-2)}=0$; \COMMENT{Eq.~\eqref{pol-eq}}
\STATE $S^{=}:=\{c=0 \mid c\in {\text{coeffs}}\,(T,\{y^{\prime},\ldots,y^{(n-1)}\}) \}$;

\STATE $S^{=}:=S^{=}\cup_{p\in P}\{p_x=0,p_y=0\}$;
\STATE $S^{=}:=S^{=}\cup_{a\in A} \{a_x\psi_y-a_y\psi_x=0\}$; \COMMENT{Eq.~\eqref{PDE-for-a}}
\STATE $S^{\neq}:=\{J\neq 0\}$; \COMMENT{Ineq.~\eqref{inequation}}
\STATE $G:=$ {\textsl{\bfseries{ThomasDecomposition}}}\,($S^{=},S^{\neq}$);
\RETURN $G$;
\end{algorithmic}
\end{algorithm}

In lines 3--17 of the algorithm {\textsl{\bfseries{LinearizationTest II}} the input linearizing
differential system (Def.~\ref{LDS}) is constructed for the Thomas decomposition computed in
line 18.  This construction is done in correspondence with the
formulas~\eqref{Jacobian}--\eqref{lf-form}, \eqref{lf-form-of2ord-lode}, and \eqref{ode-subs}--\eqref{inequation}.
Furthermore, if the output set of the Thomas decomposition is nonempty, then Eq.~\eqref{ode} is
linearizable by Thm.~\ref{LinTD}. In this case the simple systems in the decomposition provide a
partition of the solution space of the linearizing differential system and their solutions determine
the invertible point transformation~\eqref{transformation} and the coefficients $a_k(t)$ of the
linearized form~\eqref{lf-form} or~\eqref{lf-form-of2ord-lode}. In addition, if there are parameters
and/or undetermined functions in~\eqref{ode}, then the output differential systems of the Thomas decomposition
provide the compatibility conditions to these parameters/functions imposed by the linearization.

{\em Correctness and termination} are provided by those of the Thomas
decomposition~(\cite{BGLHR'12}, Sect.~3.4; \cite{Robertz'14}, Thr. 2.2.57).

\section{Implementation}

We implemented both linearization tests in Maple. Our implementation runs on Version 16 and the
subsequent ones.

First, we describe our implementation of the
algorithm {\textsl{\bfseries{LinearizationTest I}}. Given an ordinary differential equation of
the form~\eqref{ode}, to generate the determining system, denoted by $DS$ in line 3, we use the
routine {\em gendef} of the Maple package {\tt DESOLV}~\cite{CarminatiVu'00,VuJefferesonCarminati'12}. Then,
to complete the system $DS$ to involution (line 3), we choose the {\em orderly (``DegRevLex'') ranking}
(cf.~\cite{Robertz'14}, Def.~A.3.2) such that
\[
  \partial_x\succ \partial_y\,,\quad \xi\succ \eta\,,
\]
and apply the routine~{\em DifferentialThomasDecomposition} of
the package {\tt DifferentialThomas}. This package is freely available~\cite{DTD}. To compute the
dimension of the Lie symmetry algebra (line 4), we invoke the routine {\em DifferentialSystemDimensionPolynomial}.
Since in our case the solution space of the determining system is finitely dimensional, the last routine outputs
just the dimension of the solution space.

The subalgorithm {\textsl{\bfseries{LieSymmetryAlgebra}}} of line 8 was implemented in Maple
(see~\cite{Reid'91b}, Sect.~6). The implementation is based on the one of two other algorithms: the
standard form algorithm for completion of the determining system to involution and on the algorithm
of calculating power series solutions~\cite{Reid'91a}. Since that implementation done in Maple V has
not been adopted to the subsequent versions of Maple, we decided to make our own implementation of the
algorithmic approach suggested in~\cite{Reid'91b} to compute the structure
constants in~\eqref{LieAlgebra}. Our implementation takes the Janet involutive form of the determining system
outputted by the package {\tt DifferentialThomas} and exploits its routine {\em PowerSeriesSolution}.

To construct the derived algebra (line 9) we invoke the routine {\em DerivedAlgebra} which is a part of
the built-in package {\tt DifferentialGeometry:-LieAlgebras}.

In our implementation of {\textsl{\bfseries{LinearizationTest II}} we compute the expressions~\eqref{diff-u}
to obtain the left-hand side in~\eqref{pol-eq} (line 13) that is a polynomial in
$y^\prime,y^{\prime\prime},\ldots,y^{(n)}$. Then equating of all coefficients in the polynomial to zero (line 14)
and enlarging it with additional equations (lines 15 and 16) and the Jacobian inequation (line 17) yields the input linearizing
differential system for the subroutine {\em differentialThomasDecomposition} (line 18). By default,
we choose the orderly ranking
on the partial derivatives of the functions $\phi$ and $\psi$, and of those in the sets $A$ (line 10):
\[
 \partial_x\succ \partial_y\,,\quad \xi\succ \eta\succ a_0\succ\cdots\succ a_{n-2}\,.
\]

If the input ODE~\eqref{ode} contains (nonempty) sets of parameters and/or undetermined functions, then their
rankings are less than those of $a_{n-2}$ in order to derive the compatibility conditions for
parameters and functions.

\section{Examples}

In this section we demonstrate our algorithmic linearization tests
using several examples. All timings given below were obtained with Maple 16 running on a desktop computer with an Intel(R)Xeon(R) X5680 CPU clocked at 3.33 GHz and 48\,GB RAM.

\begin{example}$\cite{Lie'1883a}$
Consider the second-order Eq.~\eqref{2ndOrder} in which $f$ is given by~\eqref{LieTest} with undetermined
functions $F_k$, $0\leq k\leq 3$. Algorithm {\textsl{\bfseries{LinearizationTest I}}} is not applicable
to this case, so we apply the algorithm {\textsl{\bfseries{LinearizationTest II}}} with an
orderly ranking such that
\[
  \partial_x\succ \partial_y\,,\quad \phi\succ \psi\succ F_3 \succ F_2 \succ F_ 1\succ F_0.
\]
Then the routine {\em DifferentialThomasDecomposition} of the package {\tt DifferentialThomas}~\cite{DTD}
outputs three differential systems with disjoint solutions space in about 0.4~sec.:
\[
  S_1:=\{S_1^{=},S_1^{\neq}\}\,,\quad S_2:=\{S_2^{=},S_2^{\neq}\}\,,\quad S_3:=\{S_3^{=},S_3^{\neq}\}\,.
\]
Cor.~\ref{nonzero} guarantees that there
are linearizable
equations among the equations in family~\eqref{2ndOrder}--\eqref{LieTest}.
One of the output differential systems, namely $S_1$,
is a {\em generic simple system}
(see~\cite{Robertz'14}, Def.~2.2.67). It has eight equations, and the last two of them that contain solely functions
$F_0,F_1,F_2,F_3$  are the
{\em compatibility conditions} for these functions whose solutions admit linearization.
 These conditions have the following form:
\begin{eqnarray}
3(F_3)_{xx} - 2(F_2)_{xy} + (F_1)_{yy} - 3F_1(F_3)_x + 2F_2(F_2)_x \nonumber \\
 - 3F_3(F_1)_x
 + 3F_0(F_3)_y + 6F_3(F_0)_y  - F_2(F_1)_y =0\,,\nonumber\\[-0.4cm]
 \label{LieTest1}\\
(F_2)_{xx} - 2(F_1)_{xy} + 3(F_0)_{yy} - 6F_0(F_3)_x + F_1(F_2)_x \nonumber \\
- 3F_3(F_0)_x + 3F_0(F_2)_y  + 3F_2(F_0)_y  -
2F_1(F_1)_y =0\,.\nonumber
\end{eqnarray}
These equations are exactly the linearizability conditions for~\eqref{2ndOrder}--\eqref{LieTest} obtained
by Lie in~\cite{Lie'1883a} (cf.~\cite{Ibragimov'09}, Thm.~6.5.2). The inequations in the $S_1$ system are
\[
S_1^{\neq}=\{J\neq 0,\psi_x\neq 0, \psi_y\neq 0\}\,.
\]
The two other differential systems $S_2$ and $S_3$ have the following inequations:
\begin{equation}\label{ineqJ}
S_2^{\neq}=\{\phi_x\neq 0,\psi_y\neq 0\}\,,\quad S_3^{\neq}=\{\phi_y\neq 0,\psi_x\neq 0\}\,.
\end{equation}
Each of these systems has eight equations as $S_1$. Every equation in $S_2$ as well as in $S_3$
is valid on all common solutions to the
equations in $S_1$ (cf.~\cite{Robertz'14}, Cor.~2.2.66). In doing so,
\begin{equation}
   \psi_x=0\in S_2^{=}\,,\quad \psi_y=0\in S_3^{=}\,, \label{eqJ}
\end{equation}
and hence each of~\eqref{ineqJ} and \eqref{eqJ} implies $J\neq 0$. Therefore,
algorithm {\textsl{\bfseries{LinearizationTest II}}} reproduces Lie's classical  results on the necessary
and sufficient conditions for the linearization of the second-order ODEs from
family~\eqref{2ndOrder}--\eqref{LieTest}.
\label{LieExample}
\end{example}

\begin{example}$(\cite{Wolf'03},\text{Eq.~2.50})$
We consider the third order ODE
\begin{equation}\label{3rdODE}
 y^{\prime\prime\prime}-6\frac{y^\prime}{x^2}+3\frac{(y^{\prime})^2}{x}-\frac{1}{2}(y^{\prime})^3=0\,.
\end{equation}
This equation is linearizable by the generalized Sundman
transformation\footnote{A kind of a nonlocal transformation, which is in general not a point transformation.}~\cite{Wolf'03}. Here we check its linearizability via the point transformation~\eqref{transformation}. Eq.~\eqref{3rdODE} admits both our tests since it does not contain parameters and/or undetermined functions. Our implementation of algorithm {\textsl{\bfseries{LinearizationTest I}}}
returns {\em false} in 0.05~sec.~and that of {\textsl{\bfseries{LinearizationTest II}}} returns the empty set
in 0.4~sec.
\label{3rdOrder}
\end{example}

\begin{example}
We consider the fourth-order ODE
\begin{dmath}\label{IM}
 2x^2y\,y^{\prime\prime\prime\prime}+x^2y^2+h(x,y)\,y^{\prime}y^{\prime\prime\prime}+
     16x\,y\,y^{\prime\prime\prime}+6x^2(y^{\prime\prime})^2+
  48x\,y^{\prime}y^{\prime\prime}+24y\,y^{\prime\prime}+24(y^{\prime})^2=0\,,
\end{dmath}
where $h(x,y)$ is an undetermined function. To find all values of this function providing linearization,
we again apply algorithm {\textsl{\bfseries{LinearizationTest II}}}. The package {\tt DifferentialThomas}
for the orderly ranking satisfying
\begin{equation*}\label{Exm4ranking}
  \partial_x\succ \partial_y\,,\quad \phi\succ \psi\succ a_0 \succ a_1 \succ b
\end{equation*}
outputs in 3.3~sec.~two differential systems $S_1$ and $S_2$ (see~\eqref{EqS1} and~\eqref{EqS2}). Each
system has only one equation containing $h(x,y)$:
\begin{equation}\label{ParameterValue}
h(x,y)-8x^2=0\,.
\end{equation}
The linearizability of~\eqref{IM} under condition $\eqref{ParameterValue}$ was established
in~\cite{IbragimovMeleshkoSuksern'08}, and our computation shows that there are no other linearizable equations
of family~\eqref{IM}. Moreover, the simple systems $S_1$ and $S_2$ allow for the explicit construction of the  linearizing point transformation~\eqref{transformation} and the coefficients $a_0(t)$ and $a_1(t)$ in the  Lagerre-Forsyth form~\eqref{lf-form} of the image of~\eqref{IM} under mapping~\eqref{transformation}:
\begin{equation*}\label{LFforIM}
u^{(4)}(t)+ a(t)\,u(t)+b(t)u^{\prime}(t)=0\,.
\end{equation*}
To show this, consider first the equations in $S_1^{=}$:
\begin{equation}\label{EqS1}
\begin{array}{l}
y\,\phi_y-2\,\phi-2\,\phi_{xxxx}=0,\ \phi_{xxxy}=0,\\
x^2\phi_{xxy}-2\,\phi_y=0,\ x\,\phi_{xy}-2\phi_y=0,\\
y\phi_{yy}-\phi_y=0,\
a\,\psi_x^4-1=0,\ \psi_y=0,\\
a_x=0,\ a_y=0,\ b=0,\
h-8\,x^2=0\,,
\end{array}
\end{equation}
and its inequations
\begin{equation}\label{IneqS1}
S_1^{\neq}=\{a\neq 0,\phi_y\neq 0\}.
\end{equation}
The equation system~\eqref{EqS1} can easily be integrated by hand or using the Maple routine {\em pdsolve}. The general solution to~\eqref{EqS1}, in addition to~\eqref{ParameterValue}, reads
\[
\begin{split}
\phi_1:=&c_1x^2y^2+\sin\left(\frac{x}{\sqrt{2}}\right)\left(c_2\exp\left(-\frac{x}{\sqrt{2}}\right)+
     c_3\exp\left(\frac{x}{\sqrt{2}}\right)\right)\\
     &+\cos\left(\frac{x}{\sqrt{2}}\right)\left(c_4\exp\left(-\frac{x}{\sqrt{2}}\right)+
     c_5\exp\left(\frac{x}{\sqrt{2}}\right)\right)\,,\\
\psi_1:=&\frac{x}{c_6^{1/4}}+c_7\,,\quad a_1:=c_6\,,\quad b_1:=0\,,
\end{split}
\]
where $c_k$ $(k\in \{1,\ldots,7\})$ are arbitrary constants and the subscript 1 represents the obtained solution
to the differential system $S_1$. Ineq.~\eqref{IneqS1} imply $c_1\neq 0$ and $c_6\neq 0$.

The second differential system $S_2$ is generic, and its set of equations is given by
\begin{eqnarray}\label{EqS2}
&& 32\,y\,a\phi_y-64\,a^3(\phi+\phi_{xxxx})-96\,a^2a_x\phi_{xxx} \nonumber \\
&&\quad-36\,a\,a_x^2\phi_{xx}-3\,a_x^3\phi_x=0\,,\nonumber \\
&&128\,x^2a^3\phi_{xxxy}+15\,x^2a_x^3\psi_y-144\,x\,a\,a_x^2\phi_{y} \nonumber\\
&&\quad +288\,a^2a_x\phi_y=0\,, \nonumber\\
&&16\,x^2a^2\phi_{xxy}-3\,x^2a_x^2\phi_y+24\,x\,a\,a_x\phi_y \nonumber\\
&&\quad -32\,a^2\phi_y=0\,,\\
&&x\,a\,\phi_{xy}+3\,x\,a_x\phi_y-16\,a\phi_y=0\,,\nonumber \\
&&y\phi_{yy}-\phi_y=0,\ a\,\psi_x^4-1=0\,,\ \psi_y=0\,,\nonumber \\
&&8\,a\,a_{xx}-7\,a_x^2=0,\ a_y=0\,,\nonumber \\
&&b=0,\ h-8\,x^2=0\,. \nonumber
\end{eqnarray}

\begin{table*}[!ht]
\centering
\caption{CPU times (sec.)}
\bg {|c|c|c|c|c|c|c|c|c|c|c|c|c|c|} \hline\hline
Test & \multicolumn{13}{c|}{Order $n$ of ODE~\eqref{y2}\ \ \ \ \ \ \ \ \ \ \ \ }
\\ \cline{2-14}
     & 3 & 4 & 5 & 6 & 7 & 8 & 9 & 10 & 11 & 12 & 13 & 14 & 15 \\
\hline
  I & 0.20 & 0.61 & 1.27  & 2.54 & 4.18    & 6.49  & 10.20   & 23.21 & 39.79   & 63.38  & 91.54 & 119.42 & 150.13 \\
\hline
 \ \,\,I$_{\text A}$ & 0.28  & 0.83  & 1.51  & 3.01  & 5.28  & 9.52  & 16.83  & 45.40 & 80.72 & 150.19  & 291.13  & 484.35  & 751.20  \\
\hline
 II & 0.65  & 2.33  & 13.28  & 80.76  & 376.40  &1525.1   &7512.9   & OOM  & OOM   & OOM  & OOM  & OOM  & OOM  \\
\hline \hline
\nd
\end{table*}

\noindent
The set of inequations in $S_2$ consists three elements:
\begin{equation}\label{IneqS2}
S_2^{\neq}=\{a\neq 0,\phi_y\neq 0,a_x\neq 0\}.
\end{equation}
The differential system~\eqref{EqS2} is also easily solvable, and its general solution reads
\begin{equation}\label{SolS2}
\begin{array}{l}
\phi_2:=\frac{\phi_1}{(c_6x+c_7)^3}\,,\quad \psi_2:=c_8-\frac{1}{c_6(c_6x+c_7)}\,,\\[0.2cm]
a_2:=(c_6x+c_7)^8\,,\quad b_2:=0\,,\quad h:=8\,x^2\,.
\end{array}
\end{equation}
Here $\phi_1$ is the above presented solution to~\eqref{EqS1} for $\phi$, $c_i$ $(1\leq i\leq 8)$ are
arbitrary constants. The constraints that follow from~\eqref{IneqS2} are those in $S_1$, $c_1\neq 0, c_6\neq 0$,
and the additional inequation $c_6x+c_7\neq 0$ what rules out singularity in~\eqref{SolS2}. The obtained explicit
solutions to $S_1$ and $S_2$ form disjoined sets, since $a_x=0$ for a solution to $S_1$ and $a_x\neq 0$ for that
to $S_2$. The disjointness of solution sets for the output simple systems is guaranteed by the Thomas decomposition
algorithm~(\cite{Thomas'37}--\cite{Robertz'14}). In the given case a solution to $S_1$ provides a mapping
of~\eqref{IM} into the linear ODE
\[
   u^{(4)}(t)+c_6u(t)=0
\]
with constant coefficients, whereas a solution to $S_2$ maps~\eqref{IM} into an equation with variable coefficients
\[
   u^{(4)}(t)+(c_6x+c_7)^8u(t)=0\,.
\]
In~\cite{IbragimovMeleshkoSuksern'08}, the simplest form of the linearizing
transformation~\eqref{transformation} was found:
\[
    t=x\,,\quad u=x^2y^2\,,
\]
which maps~\eqref{IM} and \eqref{ParameterValue} into $u^{(4)}+u=0$ and corresponds to the solution of $S_1$
with
\[
c_1=1,\quad c_2=c_3=c_4=c_5=c_7=0\,.
\]
\label{4thOrder}
\end{example}

\begin{example}
As a serial example, we consider
\begin{equation}\label{y2}
(y^2)^{(n)} + y^2 =0\,,\ n\in \N_{\geq 3}\,.
\end{equation}
Obviously, Eq.~\eqref{y2} becomes $u^{(n)} + u =0$ via transformation~\eqref{transformation} of the form $t=x$ and $u=y^2$. We use this example as a benchmark for a comparative experimental analysis of the time behavior of our
algorithms when the order of the ODE grows. Additionally, we measure the CPU time for the
algorithm {\textsl{\bfseries{LinearizationTest I}}} whose subalgorithm {\textsl{\bfseries{DerivedAlgebra}}}
(in line 9) is replaced with the Maple implementation~\cite{Ceballos'12} for the detection of
an $n$-dimensional abelian subalgebra of the Lie symmetry algebra (line 8). By Thm.~\ref{theorem1},
the existence of such a subalgebra yields the criterion of linearization. Table 1 presents the CPU times, where ``OOM'' is an acronym for runs ``Out Of Memory''. The timings in
the table correspond to {\textsl{\bfseries{LinearizationTest I}}} (upper row), to its above described modification denoted by I$_{\text{A}}$ (middle row), and to {\textsl{\bfseries{LinearizationTest II}}}
(bottom row). As one can see, our first test (I) is the fastest and the second test (II) is the slowest. However,
the last one, unlike the other two, outputs much more information on the linearization. This fact was illustrated by Example~\ref{4thOrder}.

\label{ExSerial}
\end{example}

\section{Conclusions}

For the first time, the problem of the linearization test for a wide class of ordinary
differential equation of arbitrary order was algorithmically solved. In doing so, we have restricted
ourselves to the quasi-linear equations with a rational dependence on the other variables and to
point transformations, and designed two algorithmic tests in order to check linearizability. The main benefits of these restrictions are (i) the algorithmic construction of the Lie symmetry algebra for the input equation
and (ii) the reduction of the number of coefficients in the linearized equation due to the Lagerre-Forsyth canonical form~\eqref{lf-form}.

The benefit (i) allowed us to design an efficient algorithm {\textsl{\bfseries{LinearizationTest I}}}
which checks the linearizability of the equations. The second benefit (ii)
provides the feasibility of the algorithm {\textsl{\bfseries{LinearizationTest II}}} because of the overdeterminacy~(cf.~\cite{Seiler'10}, Sect.~7.5) of a linearizing differential system.
This overdeterminacy simplifies the consistency analysis of the linearizing system
answering the same question as the first test.

Moreover, due to finite-dimensionality of the solution space (cf.~\cite{Olver'95}, Prop.~6.57) of a
linearizing system, the Thomas decomposition algorithm outputs overdetermined
subsystems, as those in Example~\ref{4thOrder}. In practice, the overdeterminacy of
the outputted simple systems of the Thomas decomposition of a linearizing differential system makes them
easily solvable, much like the determining systems in the Lie symmetry analysis. Thereby, with
the algorithm {\textsl{\bfseries{LinearizationTest~II}}} one can not only detect linearizability, but
also find the linearizing transformation~\eqref{transformation} and the coefficients in
the linear form of the equation.

The Thomas decomposition for linearizing differential systems, even in the case of its inconsistency,
may be time and space consuming, especially for higher-order ODEs. That is why, in practice, it is advisable
to check the linearizability of the equation under consideration by the first algorithm before applying
the second one. In the case when Eq.~\eqref{ode} contains
parameters and/or arbitrary functions, there is no choice and one has to use the second algorithm.

The second algorithm may also improve the built-in Maple solver {\em dsolve} of differential equations. For example, {\em dsolve} applied to equation
\begin{equation}\label{Ibr6.6.57}
y^{\prime\prime\prime}+\frac{3y^\prime}{y}(y^{\prime\prime}-y^\prime)-3y^{\prime\prime}+2y^\prime-y=0
\end{equation}
outputs its solution implicitly in the complicated form of double integrals including the Maple
symbolic presentation {\em RootOf} for the roots of expressions. On the other hand Eq.~\eqref{Ibr6.6.57}
admits the linearization (cf.~\cite{Ibragimov'09}, Eqs.~6.6.57--6.6.59)
\[
   u^{\prime\prime\prime}-\frac{2}{t^3}u=0\,,\quad t=\exp(x)\,,\quad u=y^2\,
\]
which is easily obtained by our algorithm~{\textsl{\bfseries{LinearizationTest II}}} and provides the explicit form
of the solution to~\eqref{Ibr6.6.57}.

\section{Acknowledgments}
The authors are grateful to Daniel Robertz and Boris Dubrov for helpful discussions and to the anonymous reviewers for several insightful comments that led to a substantial improvement of the paper.

This work has been partially supported by the King Abdullah University of Science and Technology (KAUST baseline funding; D.~A.~Lyakhov and D.~L.~Michels), by the Russian Foundation for Basic Research (grant No.16-01-00080; V.~P.~Gerdt), and by the Ministry of Education and Science of the Russian Federation (agreement 02.a03.21.0008; V.~P.~Gerdt).

\end{document}